\theoremstyle{plain}
\newtheorem{thm}{Theorem}[section]
\theoremstyle{definition}
\newtheorem{defn}{Definition}[section]
\begin{document}

\setcounter {page}{1}
\title{A Fixed Point on Generalised Cone Metric Spaces}

\author[S. K. Pal and M. Maity]{ Sudip Kumar Pal* and Manojit Maity**\ }
\thanks{ The research of the First Author was funded by University Grants Commission, Govt. of India through the D. S. Kothari Post Doctoral Fellowship.}
\date{}
\newcommand{\acr}{\newline\indent}
\maketitle
\address{{*\,} Department of Mathematics, University of Kalyani, Kalyani-741235, India.
                Email: sudipkmpal@yahoo.co.in.\acr
           {**\,} Assistant Teacher in Mathematics, Boral High School, Kolkata-700154, India. Email: mepsilon@gmail.com\\}

\maketitle
\begin{abstract}
The aim of this paper is to prove a fixed point theorem on a
generalised cone metric spaces for maps satisfying general
contractive type conditions.
\end{abstract}
\author{}
\maketitle
{ Key words and phrases :} Generalised cone metric space, contractive mapping, fixed point. \\

\textbf {AMS subject classification (2010) :} 54C60, 54H25.  \\

\section{\textbf{Introduction:}} The study of fixed points of
mappings satisfying certain contractive conditions has been very
active area of research. Recently Long-Guang and Xian \cite{9}
generalised the concept of a metric space, by introducing cone
metric spaces, and obtained some fixed point theorem for mappings
satisfying certain contractive conditions. One can consider a
generalisation of a cone metric space by replacing the triangle
inequality by a more general inequality. As such, every cone
metric is a generalised cone metric space but the converse is not
true. However the interesting point to note that two very
important fixed point theorems, namely Banach's fixed point
theorem and Ciric's fixed point theorem have already establised in
such a space. In this paper we continue in this direction and
prove a fixed point theorem of Boyed and Wang \cite{2}, \cite{5} under fairly
general conditions in a generalised cone metric spaces.

\section{\textbf{Main Results}}
Let $E$ be a real Banach space. A nonempty convex closed subset $P
\subset E$ is called a cone in $E$ if it satisfies: \\
(i)   $P$ is closed, nonempty and $P \neq \{0\}$, \\
(ii)  $a,b \in \mathbb{R},~ a,b \geq 0 ~\mbox{and} ~ x,y \in P$
imply that
$ax + by \in P$, \\
(iii) $x \in P ~\mbox{and}~ -x \in P$ imply that $x = 0$. \\
The space $E$ can be partially ordered by the cone $P \subset E$;
i. e. $x \leq y$ if and only if $y-x \in P$. Also we write $x << y
~\mbox{if} ~ y-x \in~ int~ P$, where $int ~ P$ denotes the
interior
of $P$. \\
A cone $P$ is called normal if there exists a constant $K > 0$
such that $0 \leq x \leq y$ implies $\parallel x \parallel \leq
K\parallel y \parallel$. \\
In the following we always suppose that $E$ is a real Banach
space, $P$ is a cone in $E$ and $\leq$ is partial ordering with
respect to $P$.
\begin{defn}
Let $X$ be a nonempty set and let $E$ be a Banach space with cone
$P$ and $d:X^2\rightarrow E$ be a mapping such that for all
$x,y\in X$ and for any $k~(k\geq 2)$ distinct points
$z_1,z_2,......,z_k$ in $X$ each of them different from x
and y, one has \\
1.$\theta\leq d(x,y)$ \mbox{for all} $x,y\in X$, and
$d(x,y)=\theta$ if
and only if $x=y$.\\
2.$d(x,y)=d(y,x)$ \mbox{for all} $x,y \in X$\\
3.$d(x,y)\leq d(x,z_1) + d(z_1,z_2) + .....+ d(z_k,y)$ \mbox{for
all}
$x,y,z_1,z_2,......,z_k$ in $X$.\\
i.e. $\{d(x,z_1)+d(z_1,z_2)+.....+d(z_k,y)-d(x,y)\} \in P$\\
Then we say $(X,d)$ is a generalised cone metric space (or shortly
g.c.m.s.).
\end{defn}
Throughout this section a g.c.m.s. will be denoted by $(X,d)$ (or
sometimes by $X$ only) and $\mathbb{N}$ denote the set of all naturals.\\
Any cone metric space is a g.c.m.s. but the converse is not true
\cite{1}. We first recall some basic definitions.
\begin{defn}
A sequence $\{x_n\}_{n \in \mathbb{N}} \in X$ is said to be a
g.c.m.s. convergent if for every $\varepsilon$ in $E$ with $\theta
< \varepsilon$, there is an $N \in \mathbb{N}$ such that for all
$n>N$, $\varepsilon - d(x_n,x) \in P$ for some fixed $x \in X$.
\end{defn}
\begin{defn}
A sequence $\{x_n\}_{n \in \mathbb{N}} \in X$, is said to be a
g.c.m.s. Cauchy sequence if for every $\varepsilon \in E$ with
$\theta < \varepsilon$, there is an $N \in \mathbb{N}$ such that
for all $m,n
> N$, $\varepsilon-d(x_n,x_m) \in P$.
\end{defn}

We say that a g.c.m.s is complete if every Cauchy sequence in $X$
is convergent in $X$.\\
\begin{defn}
A mapping $T: X \rightarrow X$ is said to be contractive if for
any two points $x,y\in X$, ~ $d(x,y)-d(Tx,Ty)\in P$.
\end{defn}

\begin{defn}
A function $f: E \rightarrow P$ is said to be upper semicontinuous
at $x_0 \in E$ if there exists a neighbourhood $U$ of $x_0$ such
that $f(x)+\epsilon \in P, ~\mbox{for all}~ x\in U$.
\end{defn}
We now prove the following fixed point theorem for Boyd and Wong's
contractive mappings \cite{2}, \cite{5}.

\begin{thm}
Let $X$ be a complete g.c.m.s. and let $T:X \rightarrow X$
satisfies
\begin{eqnarray}
\psi(d(x,y)) - d(Tx,Ty)\in P,
\end{eqnarray}
 where $\psi:\bar{P} \rightarrow E$
is upper semicontinuous from right on $\bar{P}$ (the closure of
the range d) and satisfies $\psi(t) < t ~ for~all ~ t \in
\bar{P}-\{0\}$. Then $T$ has a unique fixed point $x_0$ and $T^nx
\rightarrow x_0 ~ for~all  ~ x \in X$.
\end{thm}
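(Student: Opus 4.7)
The plan is to run the standard Picard iteration in this setting. Fix an arbitrary $x \in X$ and write $x_n = T^n x$, $d_n = d(x_n, x_{n+1})$. The contractive hypothesis applied to $(x_{n-1}, x_n)$ gives $d_n \le \psi(d_{n-1})$, and since $\psi(t) < t$ on $\bar{P}\setminus\{0\}$ the sequence $\{d_n\}$ is strictly decreasing in the cone order until (if ever) it hits $0$. Assuming normality of $P$, the scalar sequence $\|d_n\|$ is monotone and bounded, so it has a limit. A short upper-semicontinuity argument rules out a nonzero limit: if $d_n \to c$ in norm with $c \ne 0$, passing to the limit from the right in $d_n \le \psi(d_{n-1})$ yields $c \le \psi(c) < c$, a contradiction. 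Hence $d_n \to 0$.

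The Cauchy step is the main obstacle and is where the generalised triangle inequality earns its keep. For $n < m$ I would take the intermediate points $z_i = x_{n+i}$ for $i = 1,\dots,m-n-1$ to obtain
\begin{equation*}
d(x_n, x_m) \le d_n + d_{n+1} + \cdots + d_{m-1}.
\end{equation*}
Since $\psi$ is only upper semicontinuous from the right, not a linear contraction, the series on the right need not converge, so I cannot conclude Cauchyness directly from telescoping. Instead I plan to adapt Boyd and Wong's classical subsequence trick: assume $\{x_n\}$ is not Cauchy, fix $\varepsilon > 0$ and extract indices $m_k > n_k \ge k$ with $\|d(x_{n_k}, x_{m_k})\| \ge \varepsilon$ and $m_k$ minimal with this property. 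Using $d_n \to 0$ and the generalised triangle inequality one shows $\|d(x_{n_k}, x_{m_k})\| \to \varepsilon$. Applying the contractive hypothesis to $(x_{n_k-1}, x_{m_k-1})$ together with the generalised triangle inequality with a couple of intermediate points, and then invoking upper semicontinuity of $\psi$ from the right, yields $\varepsilon \le \psi(\varepsilon^+) < \varepsilon$, a contradiction. Hence $\{x_n\}$ is Cauchy.

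By completeness $x_n \to x_0$ for some $x_0 \in X$. To verify that $x_0$ is fixed I would use the one-intermediate-point form of the triangle inequality:
\begin{equation*}
d(Tx_0, x_0) \le d(Tx_0, x_{n+1}) + d(x_{n+1}, x_0) \le \psi(d(x_0, x_n)) + d(x_{n+1}, x_0),
\end{equation*}
and both summands vanish as $n \to \infty$, forcing $Tx_0 = x_0$. For uniqueness, any other fixed point $y_0$ would satisfy $d(x_0, y_0) = d(Tx_0, Ty_0) \le \psi(d(x_0, y_0))$, and the strict inequality $\psi(t) < t$ for $t \ne 0$ forces $d(x_0, y_0) = 0$. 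Finally, the assertion $T^n x \to x_0$ for every $x \in X$ is precisely what the first two paragraphs establish, applied to the arbitrary starting point $x$.
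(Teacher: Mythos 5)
Your overall strategy (Picard iteration, $d_n\downarrow 0$ via right upper semicontinuity, Boyd--Wong's minimal-index subsequence trick for Cauchyness, then passage to the limit) is the same as the paper's, but there is a genuine gap that runs through several of your steps: in a g.c.m.s.\ the triangle inequality is only assumed for $k\geq 2$ \emph{distinct} intermediate points, each of them different from the two endpoints. There is no two-point inequality $d(x,y)\leq d(x,z)+d(z,y)$. Your verification that $x_0$ is fixed,
\[
d(Tx_0,x_0)\leq d(Tx_0,x_{n+1})+d(x_{n+1},x_0),
\]
uses exactly this unavailable one-intermediate-point form, so that step fails as written. The same defect infects your telescoping bound $d(x_n,x_m)\leq d_n+\cdots+d_{m-1}$: it is illegitimate when $m-n\leq 2$ (too few intermediate points) and, even for larger gaps, it requires the orbit points $x_{n+1},\dots,x_{m-1}$ to be pairwise distinct and different from $x_n,x_m$, which you never arrange. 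This is precisely why the paper's proof (i) first disposes of eventually constant orbits, (ii) passes to a subsequence of $\{T^nx\}$ with pairwise distinct terms before the Cauchy argument, (iii) splits the Cauchy estimate into the cases $m=n+3$, $m=n+4$, $m\geq n+5$ so that at least two admissible intermediate points are always available, and (iv) treats separately the possibility that $T^kx$ coincides with $x_0$ or $Tx_0$ when proving $Tx_0=x_0$. None of this case analysis is decorative; it is forced by the weakened axiom, and your proposal omits all of it.

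Two smaller points. First, you quietly assume $P$ is normal to extract a limit of $\{\Vert d_n\Vert\}$; the theorem as stated does not include normality (though, to be fair, the paper's own limit arguments are not cleaner on this score). Second, your uniqueness argument $d(x_0,y_0)\leq\psi(d(x_0,y_0))<d(x_0,y_0)$ is fine, since it uses only the contractive condition and not the triangle inequality. The Cauchy-step contradiction $\varepsilon\leq\psi(\varepsilon^{+})<\varepsilon$ is the right idea and matches the paper, but to make it rigorous you must route every distance estimate through the $k\geq 2$ polygon inequality with verified distinctness, as the paper does.
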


\begin{proof}
Given $x \in X$, define
\begin{eqnarray}
c_n = d(T^nx,T^{n-1}x).
\end{eqnarray}
since $d(Tx,Ty) \leq \psi(d(x,y)) < d(x,y)$, the sequence
$\{c_n\}_{n \in \mathbb{N}}$ ie decreasing. Suppose $c_n
\rightarrow c \in E$. Then if $c > 0$, we have $\psi(c_n)-c_n+1
\in P$. Then $\displaystyle{\limsup_{t\rightarrow c^{+}}}~ \psi(t)
-c \in P$. i.e.~$\psi(t)-c \in P$ which is a contradiction.
Therefore $c_n \rightarrow 0$.

For each $x\in X$, consider the sequence $\{T^nx\}$. First assume
that it is eventually constant. So there is some $n \in
\mathbb{N}$ such
that $T^mx = T^nx = y$ for each $m > n$.\\

Then $T^{m-n}(T^nx) =T^nx$, so denoting $k = m-n$, we have $T^ky =
y~ \mbox{for~all} ~ k \in \mathbb{N}$. It follows that $d(y,Ty) =
d(T^ky,T^{k+1}y) = c_k$ for all k, and since $c_k \rightarrow 0$,
$d(y,Ty) = 0$, so $y = Ty$. Then y is a fixed point of T.\\

If $\{T^nx\}$ is not eventually constant, then it has a
subsequence with pairwise distinct terms. Without loss of
generality, assume that $\{T^nx\}$ is this subsequence. We shall
show that $\{T^nx\}$ is a g.c.m.s. Cauchy sequence. By
contradiction suppose that there is an $\varepsilon>0$ and
sequences $\{m_k\}, \{n_k\}$ of positive integers with $k~\leq~
n_k ~ < m_k $ such that

\begin{eqnarray}
\varepsilon - d(T^{m_k}x,T^{n_k}x) \notin P ~\mbox{for all} ~ k
\in \mathbb{N}.
\end{eqnarray}

Since this is true for all $k \in \mathbb{N}$, we can conclude
that for all $k \in \mathbb{N}$, there will exist $n_k \geq k $
and an infinite number of $m_k > n_k$ for which
\begin{eqnarray}
\frac{\varepsilon}{3} - d(T^{m_k}x,T^{n_k}x) \in P.
\end{eqnarray}

For otherwise let $m_1 > n(K)$ be the highest positive integer for
which (4) holds. Since $c_k \rightarrow {0}$ as $k \rightarrow
\infty$ we can find $m_2 \in \mathbb{N}$ such that
\begin{eqnarray}
 c_k = \frac{\varepsilon}{3} - d(T^kx,T^{k-1}x) \in P  ~\mbox{for all} ~ k \geq
 m_2.
\end{eqnarray}

Now if $m_0 = max(m_1,m_2)$ then for any $i,j > m_0$

$\frac{\varepsilon}{3} - d(T^ix,T^{i+1}x) \in P$ $\Rightarrow$
$\frac{\varepsilon}{3} - d(T^ix,T^jx) \in P$,  if $j = i+1$
$\Rightarrow$ $\varepsilon - d(T^ix,T^jx) \in P$ $\Rightarrow$
$\frac{\varepsilon}{3} - d(T^ix,T^{i+1}x) \in P ;
\frac{\varepsilon}{3} - d(T^{i+1}x,T^nx) \in P ;
\frac{\varepsilon}{3} - d(T^nx,T^jx) \in P$ $\Rightarrow$
$\varepsilon - d(T^ix,,T^jx) \in P ~if~ j > i+1$, which
contradicts (3).

Now in the view of (4) we can choose $m_k$ as the least positive
integer greater than $n_k + 2$ for which
\begin{eqnarray}
d/3 = \frac{\varepsilon}{3} - d(T^{m_k}x,T^{n_k}x) \in P~
\mbox{for all}~ k \in \mathbb{N}.
\end{eqnarray}

Assume that $k \geq m_2$. Now if

$(i)$ $m \geq n+5$ then clearly, $c_m - d(T^mx,T^{m+1}x) \in P$;
$c_{m-1} - d(T^{m-1}x,T^{m-2}x) \in P$; $\frac{\varepsilon}{3} -
d(T^{m-1},T^nx) \in P$. This implies $(c_m + c_{m-1} +
\frac{\varepsilon}{3} - d(T^mx,T^nx) \in P$ ~i.e.~ $(2c_k +
\frac{\varepsilon}{3} - d(T^mx,T^nx) \in P$.

Choose $d_k = (2c_k + \frac{\varepsilon}{3} - d(T^mx,T^nx) \in P$.

$(ii)$ If $m = n+3$ then by (5)

$\frac{\varepsilon}{3} - d(T^{m-2}x,T^nx) \in P$, so $c_k -
d(T^mx,T^{m-1}x) \in P$; $c_k - d(T^{m-1}x,T^{m-2}x) \in P$ and
$\frac{\varepsilon}{3} - d(T^{m-2}x,T^nx) \in P$

implies $(2c_k + \varepsilon/3) - d(T^mx,T^nx) = d_k \in P$.

$(iii)$ If $m = n+4$ then $c_k - d(T^nx,T^{n+1}x) \in P$; $c_k -
d(T^{n+1}x,T^{n+2}x) \in P$; $c_k - d(T^{n+2}x,T^{n+3}x) \in P$;
$\frac{\varepsilon}{3} - d(T^{n+3}x,T^{n+4}x) \in P$. This implies
$(3c_k + \frac{\varepsilon}{3}) - d(T^mx,T^nx) = d_k \in P$.

Hence $\frac{\varepsilon}{3} - d_k \in P$ as $k \rightarrow
\infty$.

Again, $c_k - d(T^mx,T^{m+1}x) \in P$; $c_k - d(T^{m+1}x,T^{n+1}x)
\in P$; ~ i.e. ~

$\psi(d(T^mx,T^nx)) - d(T^{n+1}x,T^nx) \in P$.

Hence $\psi(d_k) - d(T^{n+1}x,T^nx) \in P$. Which shows that

\begin{eqnarray}
2c_k + \psi(d_k) - d(T^mx,T^nx) = d_k
\in P
\end{eqnarray}

Thus as $k \rightarrow \infty$ from (7), we obtain
$\psi(\frac{\varepsilon}{3}) - \frac{\varepsilon}{3} \in P$,

which contradicts the given condition since $\varepsilon > 0$.

Therefore in this case $\{T^nx\}$ is a g.c.m.s. Cauchy and as $X$
is complete, $\{T^nx\}$ converges to a point $x_0$ in $X$.

We shall show that $Tx_0 = x_0$. We divide the proof into two
parts. First let $T^nx$ be different from both $x_0$ and $Tx_0$
for any $n \in \mathbb{N}$. Then

$d(Tx_0,T^nx) + d(T^nx,T^{n+1}x) + d(T^{n+1}x,Tx_0) - d(x_0,Tx_0)
\in P$

i.e.~ $d(x_0,T^nx) + c_{n+1} + \psi(d(x_0,T^nx_0)) - d(x_0,Tx_0)
\in P$

hence~ $d(x_0,Tx_0) + c_{n+1} + d(x_0,T^nx_0) - d(x_0,Tx_0) \in
P$. Which gives

$\varepsilon - d(x_0,Tx_0) \in P$ for any $\varepsilon
> 0$ and as $n \rightarrow \infty$.

Which implies $Tx_0 = x_0$.

Next assume that $T^kx = x_0$ or $T^kx = Tx_0$ for some $k \in
\mathbb{N}$.

Obviously then $x_0 \neq x$ and one can easily show that
$\{T^nx_0\}$ is a sequence with the following properties.

$(i)$ $\varepsilon - \displaystyle{\lim_{n \rightarrow \infty}}
d(T^nx_0,x_0) \in P$.

$(ii)$ $x_0 - T^nx_0 \notin P$ for any $n \in \mathbb{N}$.

$(iii)$ $T^rx_0 - T^px_0 \notin P$ for any $p,r \in \mathbb{N}, p
\neq r$.

Hence proceeding the above it immediately follows that $x_0$ is a
fixed point of $T$.

That the fixed point of $T$ is unique easily follows from the
definition of $T$.

\end{proof}

\noindent\textbf{Acknowledgement:} The authors are grateful to the referee for his valuable suggestions
which considerably improved the presentation of the paper. The authors
also acknowledge the kind advice of Prof. Indrajit Lahiri for
the preparation of this paper.\\


\end{document}